
\documentclass[a4paper, reqno, 10pt]{amsart}
\pagestyle{myheadings}
\usepackage{color}
\usepackage{comment}
\usepackage{amsmath, amsfonts, amssymb, amsthm, graphics, graphicx}
\usepackage{hyperref}

\theoremstyle{plain}
\newtheorem{thm}{Theorem}[section]
\newtheorem{prop}[thm]{Proposition}
\newtheorem{lemma}[thm]{Lemma}

\theoremstyle{definition}
\newtheorem{remark}[thm]{Remark}

\providecommand{\aut}{\mathop{\rm Aut \,}\nolimits}
\providecommand{\sym}{\mathop{\rm Sym \,}\nolimits}
\providecommand{\soc}{\mathop{\rm soc}\nolimits}
\providecommand{\Wr}{\mathop{\rm Wr}\nolimits}

\providecommand{\Z}{\mathbb{Z}}

\renewcommand{\\}{\vspace{3mm}}

\usepackage[margin=30mm, a4paper]{geometry}

\title{\bf A classification of primitive permutation groups with finite stabilizers}
\author{\bf Simon M. Smith}
\email{sismith@citytech.cuny.edu}
\address{Department of Mathematics, NYC College of Technology, City University of New York (CUNY), New York, NY, USA}
\subjclass[2010]{20B07, 20B15 (primary), and 20E28 (secondary)}
\date{\today}
\begin{document}
\maketitle

\markboth{\textsc{Primitive groups with finite stabilizers}}{\textsc{Primitive groups with finite stabilizers}}

\begin{center} {\em
Dedicated to the memory of Stephen G.~Small.
} \end{center}

\begin{abstract}
We classify all infinite primitive permutation groups possessing a finite point stabilizer, thus extending the seminal Aschbacher--O'Nan--Scott Theorem to all primitive permutation groups with finite point stabilizers.
\end{abstract}

%
%
\section{Introduction}

\let\thefootnote\relax\footnotetext{The publisher requires the inclusion of the following statement: ``This manuscript has been accepted for publication in Journal of Algebra. The manuscript will undergo copyediting, typesetting, and review of the resulting proof before it is published in its final form. Please note that during the production process errors may be discovered which could affect the content, and all disclaimers that apply to the journal apply to this manuscript.''}

Recall that a transitive permutation group $G$ on a set $\Omega$ is primitive if the only $G$-invariant partitions are $\{\{\alpha\} : \alpha \in \Omega\}$ and $\{\Omega\}$. In the finite case, these groups are the fundamental actions from which all permutation groups are constituted.

The finite primitive permutation groups were classified by the famous Aschbacher--O'Nan--Scott Theorem, first stated independently by O'Nan and Scott. Scott's initial statement (\cite{Scott:Onan_Scott}) omitted the class of twisted wreath products; an extended and corrected version of the theorem appears in \cite{aschbacher_scott} and \cite{kovacs_onan_scott_correction}. The Aschbacher--O'Nan--Scott Theorem describes in detail the structure of finite primitive permutation groups in terms of finite simple groups. A modern statement of the theorem with a self-contained proof can be found in \cite{liebeck&praeger&saxl:finite_onan_scott}.\\

Primitive permutation groups with finite point stabilizers are precisely those primitive groups whose subdegrees are bounded above by a finite cardinal (\cite{schlichting}, \cite{bergman_lenstra}). This class of groups also includes all infinite primitive permutation groups that act regularly on some finite self-paired suborbit (see \cite[Problem 7.51]{kn}). Our main result is Theorem~\ref{thm:classification_of_finite_stabilizer} below; in conjunction with the Aschbacher--O'Nan--Scott Theorem, this yields a satisfying classification of all primitive permutation groups with finite point stabilizers, describing in detail their structure in terms of finitely generated simple groups.

\begin{thm} \label{thm:classification_of_finite_stabilizer}

If $G\leq \sym(\Omega)$ is an infinite primitive permutation group with a finite point stabilizer $G_\alpha$, then $G$ is finitely generated by elements of finite order and possesses a unique (non-trivial) minimal normal subgroup $M$; there exists an infinite, nonabelian, finitely generated simple group $K$ such that $M = K_1 \times \cdots \times K_m$, where $m\geq 1$ is finite and $K_i \cong K$ for $1 \leq i \leq m$; the stabilizer $G_\alpha$ acts transitively on the components $K_1, \ldots, K_m$ of $M$ by conjugation; and $G$ falls into precisely one of the following categories:

\begin{enumerate}
\item \label{classification_of_finite_stabilizer:split_extension_case} 
$M$ is simple and acts regularly on $\Omega$, and $G$ is equal to the split extension $M.G_\alpha$ for some $\alpha \in \Omega$, with no non-identity element of $G_\alpha$ inducing an inner automorphism of $M$;

\item \label{classification_of_finite_stabilizer:almost_simple_case} $M$ is simple, and acts non-regularly on $\Omega$, with $M$ of finite index in $G$ and $M \leq G \leq \aut(M)$;

\item \label{classification_of_finite_stabilizer:wreath_product_case}
$M$ is non-simple.  In this case $m>1$, and $G$ is permutation isomorphic to a subgroup of the wreath product $H \Wr_\Delta \sym(\Delta)$ acting via the product action on $\Gamma^m$, where $\Delta = \{1, \ldots, m\}$, $\Gamma$ is some infinite set and $H \leq \sym(\Gamma)$ is an infinite primitive group with a finite point stabilizer. Here $K$ is the unique minimal normal subgroup of $H$. Moreover, if $M$ is regular, then $H$ is of type~{\normalfont (\ref{classification_of_finite_stabilizer:split_extension_case})} and if $M$ is non-regular then
 $H$ is of type {\normalfont (\ref{classification_of_finite_stabilizer:almost_simple_case})}. 
\end{enumerate}
\end{thm}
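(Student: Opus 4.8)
The plan is to follow the architecture of the finite O'Nan--Scott proof, but to first dispatch two features peculiar to the infinite setting: finite generation, and a descending chain condition on normal subgroups that plays the role of finiteness. Since $G_\alpha$ is finite, every non-diagonal orbital of $G$ on $\Omega$ is finite, so $G$ has a connected, locally finite orbital graph $\Gamma$ on which it acts vertex- and arc-transitively with finite vertex stabilizers. If $g_1\alpha, \dots, g_d\alpha$ are the neighbours of $\alpha$, then $\langle G_\alpha, g_1, \dots, g_d\rangle$ fixes $\alpha$'s stabilizer, contains $\alpha$ together with the closure of its neighbour-set, hence (connectedness) is transitive and so equals $G$; thus $G$ is finitely generated. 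For the stronger claim, note $N := \langle G_\alpha^{\,G}\rangle$ is non-trivial --- otherwise $G_\alpha = 1$ and the regular group $G$ would be primitive, forcing $G$ finite --- hence transitive, with $G = N G_\alpha$; the same orbital-graph argument applied to $N$ shows $N$ is finitely generated, so $N$, and therefore $G = \langle N, G_\alpha\rangle$, is generated by finitely many conjugates of elements of the finite group $G_\alpha$, each of which has finite order.

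Next I show the non-trivial normal subgroups of $G$ satisfy the minimal condition. If $N_1 \supsetneq N_2$ are both non-trivial normal subgroups, each is transitive, so $(N_i)_\alpha = N_i \cap G_\alpha$ is finite; and $(N_1)_\alpha = (N_2)_\alpha$ would give $(N_1)_\alpha \le N_2$ and hence $N_1 = N_2 (N_1)_\alpha = N_2$, so $|(N_1)_\alpha| > |(N_2)_\alpha|$. As these are positive integers, every strictly descending chain of non-trivial normal subgroups is finite; thus $G$ has a minimal normal subgroup $M$, and every non-trivial normal subgroup contains one. Uniqueness follows since two distinct minimal normal subgroups $M_1, M_2$ centralise each other, so $M_2$ is contained in the centraliser in $\sym(\Omega)$ of the transitive group $M_1$, which is semiregular; being also transitive, $M_2$ is regular, and likewise $M_1$, whence $M_1 M_2 = M_1 \times M_2$ has a point stabiliser of cardinality $|M_2|$, forcing $M_2$, and so $\Omega$, to be finite. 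Hence $M$ is the unique minimal normal subgroup.

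Now $M$ is finitely generated (transitive with finite stabiliser) and characteristically simple, since characteristic subgroups of $M$ are normal in $G$. A maximal proper normal subgroup $R$ of $M$ exists by finite generation; set $K = M/R$. Using $G = M G_\alpha$ and $M \le N_G(R)$ one finds $R$ has only $[G_\alpha : G_\alpha \cap N_G(R)] < \infty$ conjugates $R = R_1, \dots, R_m$ under $G$; their intersection is normal in $G$ and properly contained in $M$, hence trivial, and the standard subdirect-product argument (maximality of each $R_i$ together with $G$-transitivity on $\{R_i\}$) gives, after discarding redundant $R_i$, that $M = K_1 \times \cdots \times K_m$ with $K_i = \bigcap_{j \ne i} R_j \cong K$ simple and $m$ finite. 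If $K$ were finite, or abelian, then $M = K_1 \times \cdots \times K_m$ would be finite --- a finitely generated characteristically simple abelian group is a finite elementary abelian $p$-group, since $pM = 0$ or $M$ for every prime $p$ and a finitely generated divisible group is trivial --- and hence $\Omega$ would be finite. Thus $K$ is an infinite, non-abelian, finitely generated simple group.

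Finally I split according to the action of $M$. If $M$ is regular, $G_\alpha$ is a complement and $G = M.G_\alpha$; were $1 \ne x \in G_\alpha$ to induce conjugation by $n \in M$ on $M$, then $xn^{-1} \in C_G(M)$ is non-trivial (as $M \cap G_\alpha = 1$), and the non-trivial normal subgroup $C_G(M)$ would contain the unique minimal normal subgroup $M$, contradicting $M \cap C_G(M) = Z(M) = 1$; this is case~(\ref{classification_of_finite_stabilizer:split_extension_case}). If $M$ is simple and non-regular, the same computation forces $C_G(M) = 1$, so $M \cong \Inn(M) \le G \le \aut(M)$, and $[G:M] = [G_\alpha : M_\alpha] < \infty$; this is case~(\ref{classification_of_finite_stabilizer:almost_simple_case}). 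If $M$ is non-regular and non-simple then $m > 1$, and one runs the product-action argument: $G$ permutes the minimal normal subgroups $K_1, \dots, K_m$ of $M$, and any proper non-trivial $G_\alpha$-invariant sub-product would be normal in $G = M G_\alpha$, contradicting minimality of $M$, so $G_\alpha$ acts transitively on $\{K_1, \dots, K_m\}$; primitivity, together with the finiteness of $M_\alpha$ (which excludes diagonal-type subgroups, these being infinite), forces $M_\alpha$ to be the full direct product $L_1 \times \cdots \times L_m$ of its coordinate projections $L_i = \pi_i(M_\alpha) \le K_i$. Consequently $\Omega \cong M/M_\alpha \cong K_1/L_1 \times \cdots \times K_m/L_m \cong \Gamma^m$ as $M$-sets, where $\Gamma = K_1/L_1$ is infinite since $L_1$ is finite, $K_1$ is not, and $L_1 \ne 1$ because $M$ is non-regular. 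The standard product-action normal form then embeds $G$, as a permutation group, into $H \Wr_\Delta \sym(\Delta)$ acting on $\Gamma^m$, where $H$ is the group induced on a coordinate by $N_G(K_1)$; one checks $H$ is primitive on $\Gamma$ with finite point stabiliser $L_1$ and socle $K_1 \cong K$ acting non-regularly, so $H$ is of type~(\ref{classification_of_finite_stabilizer:almost_simple_case}); this is case~(\ref{classification_of_finite_stabilizer:wreath_product_case}). The main obstacle is this last step --- carrying the product-action analysis of the finite O'Nan--Scott theorem into the infinite setting, in particular proving that primitivity forces $M_\alpha = L_1 \times \cdots \times L_m$ and that the top group $H$ is itself primitive of almost simple type; it is precisely the finiteness of $G_\alpha$, hence of every $L_i$, that keeps the relevant combinatorial data finite, eliminates the diagonal-type alternatives, and makes the argument go through.
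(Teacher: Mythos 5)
Your treatment of the socle is correct and takes a genuinely different, more self-contained route than the paper's. Where the paper deduces that $G$ has only finitely many non-trivial normal subgroups from Hall's theorem on finitely generated groups and defines $M$ as their intersection, you get min-$n$ directly and elementarily: in a strict chain $N_1\supsetneq N_2$ of non-trivial (hence transitive) normal subgroups, Dedekind's law gives $N_1=N_2(N_1\cap G_\alpha)$, so the stabilizer orders strictly decrease and are bounded by $|G_\alpha|$; uniqueness then comes from the regular-centralizer argument. And where the paper obtains a minimal normal subgroup $K$ of $M$ by pushing min-$n$ down to the finite-index subgroup $M$ via Wilson's theorem, you take a maximal proper normal subgroup of the finitely generated group $M$ (Zorn) and run the subdirect-product argument on its finitely many $G$-conjugates. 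Both routes work; yours trades two citations for a slightly longer decomposition argument. Cases (\ref{classification_of_finite_stabilizer:split_extension_case}) and (\ref{classification_of_finite_stabilizer:almost_simple_case}) match the paper's proofs.

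The gap is in case (\ref{classification_of_finite_stabilizer:wreath_product_case}) --- precisely the part the paper does not reprove but imports from Macpherson--Praeger (Lemma 4.7 of \emph{Infinitary versions of the O'Nan--Scott Theorem}), after carefully checking its hypotheses. Two steps of your sketch are not established. First, the claim $M_\alpha=L_1\times\cdots\times L_m$: your stated reason, that finiteness of $M_\alpha$ ``excludes diagonal-type subgroups, these being infinite,'' rules out full diagonal subgroups (isomorphic to the infinite group $K$) but does not rule out $M_\alpha$ being a \emph{proper} subdirect subgroup of the finite group $L_1\times\cdots\times L_m$; no cardinality consideration forbids that. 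The argument that does work is: $L:=L_1\times\cdots\times L_m$ is a finite $G_\alpha$-invariant subgroup of $M$ containing $M_\alpha$, so $LG_\alpha$ is a subgroup containing the maximal subgroup $G_\alpha$; it cannot equal $G$, since $|LG_\alpha:G_\alpha|\leq|L|$ is finite while $|G:G_\alpha|=|\Omega|$ is not; hence $L\leq G_\alpha\cap M=M_\alpha$. You name the right ingredients (primitivity and finiteness) but the deduction as written does not go through. Second, the primitivity of the component group $H$ on $\Gamma$ is dismissed with ``one checks''; deriving maximality of $H_\gamma$ in $H$ from maximality of $G_\alpha$ in $G$ is a genuine piece of work and is the content of claim (4) in the cited lemma. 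Until these two points are argued, case (\ref{classification_of_finite_stabilizer:wreath_product_case}) is an outline rather than a proof.
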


For each type (\ref{classification_of_finite_stabilizer:split_extension_case}), (\ref{classification_of_finite_stabilizer:almost_simple_case}) and (\ref{classification_of_finite_stabilizer:wreath_product_case}) there exist examples of infinite primitive permutation groups with finite point stabilizers. We present these in Section~\ref{section:examples}.

For permutation groups which lie in classes (\ref{classification_of_finite_stabilizer:split_extension_case}) and (\ref{classification_of_finite_stabilizer:wreath_product_case}) there are known conditions which guarantee primitivity (Proposition~\ref{prop:G_prim_iff_Ga_no_normalising} and the paragraph immediately following it, and Lemma~\ref{lem:dixon_mortimer}).

For any group $G$ of type (\ref{classification_of_finite_stabilizer:wreath_product_case}), an explicit permutation embedding of $G$ into $H \Wr \sym(\Delta)$ is described in Lemma~\ref{lem:product} and its proof.

\\

This paper is not the first to extend the Aschbacher--O'Nan--Scott Theorem to specific classes of infinite groups.
In \cite{macpherson_pillay:morley_rank_onan_scott}, a version of the Aschbacher--O'Nan--Scott Theorem for definably primitive permutation groups of finite Morley rank is proved,
while \cite{gelander_and_glasner:countable_primitive_groups} and \cite{gelander_and_glasner:onan_scott} contain Aschbacher--O'Nan--Scott-style classifications for several classes of countably infinite primitive permutation groups in geometric settings.
Macpherson and Praeger (\cite{infinitary_versions}) extend the Aschbacher--O'Nan--Scott Theorem to infinite primitive permutation groups acting on countably infinite sets that posses a minimal closed (in the topology of pointwise convergence) normal subgroup which itself has a minimal closed normal subgroup.
Much of our proof of Theorem~\ref{thm:classification_of_finite_stabilizer} involves establishing that an infinite primitive permutation group with a finite point stabilizer has a unique minimal normal subgroup which itself has a minimal normal subgroup. Having shown this, we could then apply \cite[Theorem 1.1]{infinitary_versions} to obtain a classification. However, the result one obtains with this approach is weaker than Theorem~\ref{thm:classification_of_finite_stabilizer} above, because of the more general situation that is considered in \cite{infinitary_versions}.

Of course it must also be mentioned that much of the strength of original Aschbacher--O'Nan--Scott Theorem derives from the classification of the finite simple groups; nothing similar exists for finitely generated simple groups.

%
%
\section{The socle}
\label{section:the_socle_is_a_minimal_normal_subgroup}

We use $\sym(\Omega)$ to denote the full symmetric group of a set $\Omega$ and will write $K^g$ to mean $g^{-1} K g$ throughout. In fact, we will always write group actions in this way, with $\alpha^g$ representing the image of $\alpha \in \Omega$ under $g \in G$ whenever $G$ acts on $\Omega$. The orbits of a normal subgroup of a group $G\leq \sym(\Omega)$ form a $G$-invariant partition of $\Omega$; thus, every non-trivial normal subgroup  of a primitive permutation group $G$ acts transitively on $\Omega$. A transitive group $G$ is primitive if and only if for all $\alpha \in \Omega$ the point stabilizer $G_\alpha$ is a maximal subgroup of $G$. Note that if $G$ is transitive and $G_\alpha$ is finite for some $\alpha \in \Omega$, then $G_\alpha$ is finite for all $\alpha \in \Omega$. We say a group $G$ is {\em almost simple} if there exists a normal nonabelian simple subgroup $N$ such that $N \leq G \leq \aut(N)$.

A {\em minimal normal subgroup} of a non-trivial group $G$ is a non-trivial normal subgroup of $G$ that does not properly contain any other non-trivial normal subgroup of $G$. The {\em socle} of $G$, denoted by $\soc(G)$, is the subgroup generated by the set of all minimal normal subgroups of $G$. If $G$ has no minimal normal subgroup then $\soc(G)$ is taken to be $\langle 1 \rangle$. Non-trivial finite permutation groups always have a minimal normal subgroup, so their socle is non-trivial; this is not true in general of infinite permutation groups.

\\

In this section we show that every infinite primitive permutation group $G$ with finite point stabilizers has a unique minimal normal subgroup $M$, and $M$ is characteristically simple, finitely generated, and of finite index in $G$. Furthermore, $M$ is equal to the direct product of finitely many of its simple subgroups. Since $M$ is the only minimal normal subgroup of $G$, it is necessarily equal to the socle of $G$.

\begin{lemma} \label{lemma:all_normal_subgroups_hv_finite_index} If $G \leq \sym(\Omega)$ is primitive, then $|G:N| \leq |G_\alpha|$ for all non-trivial normal subgroups $N \lhd G$ and all $\alpha \in \Omega$.
\end{lemma}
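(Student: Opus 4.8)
The plan is to exploit the primitivity of $G$ through the maximality of the point stabilizer $G_\alpha$. Fix a non-trivial normal subgroup $N \lhd G$ and a point $\alpha \in \Omega$. Since $N$ is non-trivial and normal in the primitive group $G$, it acts transitively on $\Omega$ (as noted in the text, the orbits of $N$ form a $G$-invariant partition). The first step is to consider the subgroup $NG_\alpha$. Because $N$ is normal, $NG_\alpha$ is a subgroup of $G$ containing $G_\alpha$; by the maximality of $G_\alpha$, either $NG_\alpha = G_\alpha$ or $NG_\alpha = G$. The former would force $N \leq G_\alpha$, contradicting transitivity of $N$ (as $\Omega$ is infinite, or simply because a non-trivial normal subgroup cannot be contained in a point stabilizer of a primitive group on more than one point). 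Hence $NG_\alpha = G$.

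The second step is the counting argument. From $G = NG_\alpha$ we get, by the second isomorphism theorem, $G/N = NG_\alpha/N \cong G_\alpha/(N \cap G_\alpha)$, so
\[
|G:N| = |G_\alpha : N \cap G_\alpha| \leq |G_\alpha|.
\]
This is exactly the claimed bound. Note this also shows $N$ has finite index in $G$ whenever $G_\alpha$ is finite, which is the form in which the lemma will be used later.

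I do not anticipate a serious obstacle here; the only point requiring a word of care is the justification that $N \not\leq G_\alpha$, i.e. that a non-trivial normal subgroup of a primitive group is transitive and hence not contained in a point stabilizer — but this is recorded in the preamble to the lemma, so it may simply be cited. Everything else is the standard orbit–stabilizer and isomorphism-theorem bookkeeping, valid for groups of arbitrary cardinality since $N \cap G_\alpha$ being finite (a subgroup of $G_\alpha$) makes the index computation unproblematic.
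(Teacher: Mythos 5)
Your proposal is correct and follows essentially the same route as the paper: both establish $G = NG_\alpha$ from the transitivity of the non-trivial normal subgroup $N$ and then bound $|G:N|$ by $|G_\alpha|$ (the paper phrases the counting via a right transversal of $N$ contained in $G_\alpha$, you via the second isomorphism theorem, which amounts to the same thing). No gaps.
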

\begin{proof} Let $G\leq \sym(\Omega)$ be primitive. If $N \lhd G$ is non-trivial, then it is transitive, and so for all $\alpha \in \Omega$ we have $G = NG_\alpha$. But this implies that $G_\alpha$ contains a right transversal of $N$ in $G$, and therefore $|G : N| \leq |G_\alpha|$.
\end{proof}

If $G$ is a primitive group of permutations of a set $\Omega$, and some point stabilizer $G_\alpha$ of $G$ is finite, it is simple to see that $G$ is finitely generated by elements of finite order: the group $G_\alpha$ is a maximal subgroup of $G$, and for $\beta \in \Omega \setminus \{\alpha\}$ we have $\langle G_\alpha, G_\beta \rangle = G$ or $G_\alpha$. Because $G$ is primitive, the latter occurs only if $G$ is regular, but infinite primitive permutation groups are never regular.
Hence $G$ is countable (or finite). Since $G$ is transitive, $\Omega$ is also countable.

\begin{thm} \label{socle_size} If $G \leq \sym (\Omega)$ is primitive and infinite, and $G_\alpha$ is finite for some $\alpha \in \Omega$, then $G$ has a unique minimal normal subgroup $M$, and
\begin{enumerate}
\item \label{item:M_bounded} $|G:M| \leq |G_\alpha|$; and
\item \label{item:M_finitely_generated} $M$ is finitely generated and characteristically simple.
\end{enumerate}
\end{thm}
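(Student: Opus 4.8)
The plan is to first establish existence of a minimal normal subgroup, then uniqueness, and finally the two numbered properties. For existence, I would use Lemma~\ref{lemma:all_normal_subgroups_hv_finite_index}: every non-trivial normal subgroup $N \lhd G$ satisfies $|G:N| \leq |G_\alpha| < \infty$. Consider a non-trivial normal subgroup $N$ of smallest possible index (such exists since the indices form a non-empty set of positive integers bounded by $|G_\alpha|$). I claim $N$ is minimal normal: if $1 \neq L \lhd G$ with $L < N$, then $L$ also has index at most $|G_\alpha|$, but $L \leq N$ forces $|G:L| \geq |G:N|$, so $|G:L| = |G:N|$ and hence $|N:L| = 1$, a contradiction. (One must be slightly careful that $L \lhd G$, not merely $L \lhd N$ — but a genuinely minimal normal subgroup can be extracted because any descending chain of non-trivial normal subgroups of $G$ has indices that are non-decreasing and bounded, hence eventually constant, forcing the chain to stabilize.)

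For uniqueness, suppose $M_1, M_2$ are distinct minimal normal subgroups of $G$. Then $M_1 \cap M_2 \lhd G$ is properly contained in each, so by minimality $M_1 \cap M_2 = 1$, whence $M_1$ and $M_2$ centralize each other and $M_1 M_2 = M_1 \times M_2$. Now $M_1 M_2$ is a non-trivial normal subgroup, so $|G : M_1 M_2| \leq |G_\alpha|$ by Lemma~\ref{lemma:all_normal_subgroups_hv_finite_index}; but also $|G:M_1| \leq |G_\alpha|$ and $|M_1 M_2 : M_1| = |M_2|$, and since $M_2$ is itself transitive on the infinite set $\Omega$ it is infinite, forcing $|G : M_1 M_2| = |G:M_1| / |M_2|$ to be non-integral or zero — more cleanly, $|G:M_1|$ is finite while $|M_1M_2 : M_1| = |M_2|$ is infinite, contradicting $M_1 \leq M_1 M_2 \leq G$ with $|G:M_1|$ finite. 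Hence $M := M_1 = M_2$ is the unique minimal normal subgroup. Property~(\ref{item:M_bounded}) is then immediate from Lemma~\ref{lemma:all_normal_subgroups_hv_finite_index}.

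For property~(\ref{item:M_finitely_generated}): since $|G:M|$ is finite and $G$ is finitely generated (noted in the excerpt: a primitive group with a finite point stabilizer is finitely generated by elements of finite order), $M$ is finitely generated by the Reidemeister–Schreier / Nielsen–Schreier-type fact that a finite-index subgroup of a finitely generated group is finitely generated. For characteristic simplicity: $M$ is the unique minimal normal subgroup of $G$, hence is characteristic in $G$ (any automorphism of $G$ permutes the minimal normal subgroups, of which there is only one). Now if $1 \neq C$ is a characteristic subgroup of $M$, then $C \lhd G$ since $M \lhd G$; by minimality of $M$ we get $C = M$. Thus $M$ has no proper non-trivial characteristic subgroup, i.e. $M$ is characteristically simple.

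The main obstacle I anticipate is the existence argument: in an infinite group one cannot simply invoke finiteness to pick a minimal normal subgroup, so the bounded-index property from Lemma~\ref{lemma:all_normal_subgroups_hv_finite_index} must be leveraged carefully to guarantee a descending chain of normal subgroups terminates. The key insight is that indices are positive integers bounded above by $|G_\alpha|$, and strictly decrease (or the chain stabilizes), so there is no infinite strictly descending chain — this is the one genuinely non-formal step, and everything else is routine group theory.
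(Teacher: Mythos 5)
Your argument is essentially correct, but it takes a genuinely different route from the paper, and there is one local slip worth flagging. The slip: in your first existence argument you take $N$ of \emph{smallest} index and try to derive a contradiction from $1\neq L\lhd G$ with $L<N$; but $L\leq N$ gives $|G:L|\geq|G:N|$, which is the \emph{same} inequality that minimality of the index provides, so no contradiction follows. You want $N$ of \emph{largest} index among non-trivial normal subgroups (which exists since indices are positive integers bounded by $|G_\alpha|$): then $L<N$ forces $|G:L|=|G:N|\,|N:L|>|G:N|$, contradicting maximality. Your parenthetical descending-chain argument is correct and independently repairs the gap --- any strictly descending chain of non-trivial normal subgroups has strictly increasing finite indices bounded by $|G_\alpha|$, so it terminates and a minimal normal subgroup exists --- so the existence proof stands, but as written the primary argument has the inequality pointing the wrong way.

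As for the comparison: the paper proves existence and uniqueness in one stroke by combining Lemma~\ref{lemma:all_normal_subgroups_hv_finite_index} with Hall's theorem (a finitely generated group has only finitely many subgroups of index at most $n$, hence $G$ has only finitely many non-trivial normal subgroups) and Poincar\'e's theorem, defining $M$ as the intersection of all non-trivial normal subgroups; this $M$ has finite index in the infinite group $G$, so it is non-trivial and is automatically the unique minimal normal subgroup. Your route avoids Hall's theorem entirely: existence comes from the descending chain condition, and uniqueness comes from the observation that two distinct minimal normal subgroups would intersect trivially, so $|M_1M_2:M_1|=|M_2|$ would be infinite (every non-trivial normal subgroup of a primitive group is transitive on the infinite set $\Omega$), contradicting $|G:M_1|\leq|G_\alpha|$. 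This is a clean and arguably more elementary argument, relying only on the bounded-index lemma; the paper's intersection argument buys a slightly shorter uniqueness step and reuses Hall's theorem, which it needs again in Theorem~\ref{thm:socle_is_K_T} to establish min-$n$. Your proofs of properties (\ref{item:M_bounded}) and (\ref{item:M_finitely_generated}) coincide with the paper's (Schreier's lemma plus the fact that characteristic subgroups of $M$ are normal in $G$).
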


\begin{proof}
Fix $\alpha \in \Omega$. By Lemma~\ref{lemma:all_normal_subgroups_hv_finite_index}, all non-trivial normal subgroups of $G$ are of index at most $|G_\alpha|$. Thus, we may choose a non-trivial normal subgroup $M$ of $G$ whose index in $G$ is greatest.
If $N$ is any non-trivial normal subgroup of $G$, then $M \cap N$ has finite index in $G$ because $|G:N|$ and $|G:M|$ are finite.
Therefore, $M \cap N$ is a non-trivial normal subgroup of $G$ satisfying $|G: M \cap N| \geq |G:M|$, and so $|G: M \cap N| = |G:M|$ by our choice of $M$. Hence $M \cap N = M$. Thus $M$ is the unique minimal normal subgroup of $G$.

Since $G$ is finitely generated and $M$ has finite index in $G$, it follows from Schreier's Lemma (see \cite[Theorem 1.12]{cameron:permutation_groups} for example) that M must also be finitely generated.
Furthermore, a characteristic subgroup of $M$ is normal in $G$, so $M$ must be characteristically simple.
\end{proof}

\begin{thm} \label{thm:socle_is_K_T}
Let $G \leq \sym (\Omega)$ be primitive and infinite, with a finite point stabilizer $G_\alpha$, and let $M$ be its unique minimal normal subgroup. Then $M$ is a direct product
\[M = \prod (K^{g} : {g \in T}), \]
where $K\leq M$ is some infinite nonabelian finitely generated simple group and $T$
is any right transversal of the normaliser $N_G(K)$ in $G$. Furthermore, $G_\alpha$ acts transitively on the components $\{K^g : g \in T\}$ of $M$ by conjugation.
\end{thm}

\begin{proof} The group $M$ is finitely generated, so it contains a maximal proper normal subgroup $N$ (which may be trivial). Since $G = MG_\alpha$, the number of $G$-conjugates of $N$ is equal to the number of $G_\alpha$-conjugates of $N$, which is finite. Let $\{N_1, \ldots, N_\ell\}$ be the set of all $G$-conjugates of $N$, and for $i = 1, \ldots, \ell$, write $K_i := M / N_i$. The groups $K_1, \ldots, K_\ell$ are pairwise isomorphic and simple. The natural homomorphism $\psi: M \rightarrow K_1 \times \cdots \times K_\ell$ given by $h \mapsto (N_1 h, \ldots, N_\ell h)$ is injective, because $\cap_{i=1}^\ell N_i$ is normal in $G$ and is a proper subgroup of $M$. The projection of $\psi(M)$ onto each component $K_i$ is clearly surjective, so $\psi(M)$ is a subdirect product of $K_1 \times \cdots \times K_\ell$.
Since $M$ is infinite the groups $K_1, \ldots, K_\ell$ are infinite, and therefore nonabelian. 
A subdirect product of finitely many nonabelian simple groups is isomorphic to a direct product of some of them (this is sometimes known as Scott's Lemma, see \cite{Scott:Onan_Scott}, but is in fact a special case of a theorem of universal algebra). Thus, $M \cong K_1 \times \cdots \times K_m$ for some $m \leq \ell$. 

Identifying $K_i$ with its isomorphic image in $M$ for $1 \leq i \leq m$, we have $M = K_1 \times \cdots \times K_m$. Since they are simple, the subgroups $K_1, \ldots, K_m$ are the only minimal normal subgroups of $M$ and therefore conjugation by elements of $G$ permutes them amongst themselves. If $\{K_i : i \in I\}$ is any orbit of $G_\alpha$ on $\{K_i : 1 \leq i \leq m\}$, then $\prod_{i \in I} K_i$ is a normal subgroup of $G$, and therefore must equal $M$.  Therefore $G_\alpha$ acts transitively on $\{K_1, \ldots, K_m\}$ by conjugation, and if $T$ is any right transversal of $N_G(K_1)$ in $G$, then $\{K_1, \ldots, K_m\} = \{K_1^g : g \in T\}$.

Finally we note that since $M$ is finitely generated, $K_1$ is finitely generated and because $K_1$ is nonabelian, $M$ is nonabelian.
\end{proof}

%
%
\section{Proof of Theorem~\ref{thm:classification_of_finite_stabilizer}}

The following lemma applies to all permutation groups $G \leq \sym(\Omega)$, not just to those which are primitive. It is a technical lemma that complements Theorem~\ref{thm:classification_of_finite_stabilizer} by describing an explicit permutation embedding of a group of type (\ref{classification_of_finite_stabilizer:wreath_product_case}) into a wreath product in product action.

\begin{lemma} \label{lem:product} Suppose $M = K_1 \times \cdots \times K_m \unlhd G \leq \sym(\Omega)$, such that $M$ is transitive and some point stabiliser $G_\alpha$ transitively permutes the components $\{K_1, \ldots, K_m\}$ of $M$ by conjugation. Suppose further that $M_\alpha = \pi_1(M_\alpha) \times \cdots \times \pi_m(M_\alpha)$, where each $\pi_i$ is the projection of $M$ onto $K_i$. If $K:=K_1$ acts faithfully and transitively on a set $\Gamma$ such that $\pi_1(M_\alpha) = K_\gamma$ for some $\gamma \in \Gamma$, then there exists a homomorphism $\psi : N_{G_\alpha}(K) \rightarrow N_{\sym(\Gamma)_\gamma}(K)$ and a permutational embedding $(\hat{\phi}, \theta)$ of $G$ into $\sym(\Gamma^m)$, where $\theta: \Omega \rightarrow \Gamma^m$ is a bijection such that $\theta(\alpha) = (\gamma, \ldots, \gamma)$, and $\hat{\phi}: G \rightarrow \sym(\Gamma^m)$ is a monomorphism such that $\hat{\phi}(M) = K^m$ and $\hat{\phi}(G_\alpha) \leq \psi(N_{G_\alpha}(K)) \Wr S_m$ acting with its product action on $\Gamma^m$. \end{lemma}

\begin{proof} Choose a set $T:=\{g_1, \ldots, g_m\} \subseteq G_\alpha$ such that $K_i = K^{g_i}$ for $i = 1, \ldots, m$, with $g_1 = 1$. The action of any $g \in G_\alpha$ on $\{K_1, \ldots, K_m\}$ induces a permutation $\sigma(g) \in \sym(m)$ so that $K_i^g = K_{i^{\sigma(g)}}$ for all $i$. Notice that for all $i \in \{1, \ldots, m\}$ and $g \in G_\alpha$ the element $h_i := g_i gg_{i^{\sigma(g)}}^{-1}$ lies in $N:=N_{G_\alpha}(T)$, so for all $x = (x_1, \ldots, x_m) \in M$ we have,
\begin{equation} \label{eq:projection}
 	\pi_i(x)^g = x_i^g = \left (x_i^{g_i^{-1} h_i} \right )^{g_{i^{\sigma(g)}}} = \pi_{i^{\sigma(g)}}(x^g).
\end{equation}
We consider $K^m \leq \sym(\Gamma^m)$ via the product action. Let $\underline{\gamma}:= (\gamma, \ldots, \gamma) \in \Gamma^m$. Now $N$ normalises $K$ and $M_\alpha$, so it normalises $K_\gamma = \pi_1(M_\alpha)$. For $h \in N$ let $\psi(h)$ be the map which sends $\gamma^k$ to $\gamma^{h^{-1}k h}$ for all $k \in K$. It is not difficult to check that $\psi(h)$ is a permutation of $\Gamma$. The map $\psi: N \rightarrow \sym(\Gamma)_\gamma$ is a homomorphism. By considering the action of $K$ on $\Gamma$, we see that conjugation by $h \in N$ and by $\psi(h)$ induce the same automorphism of $K$.

We begin by constructing a permutation isomorphism $(\phi, \theta)$ between $M$ and $K^m$. Define $\phi : M \rightarrow K^m$ by $\phi(x) := (\pi_1(x)^{g^{-1}_1}, \ldots, \pi_m(x)^{g^{-1}_m})$. One may easily verify that $\phi$ is an isomorphism between $M$ and $K^m$. Moreover, for each $i$ we have $\pi_i(M_\alpha)^{g^{-1}_i} = \pi_1(M_\alpha)$, and so $\phi(M_\alpha) = (K_\gamma)^m$. Let $\theta: \Omega \rightarrow \Gamma^m$ be the map with $\theta(\alpha^x) := \underline{\gamma}^{\phi(x)}$ for all $x \in M$. This map is well-defined, because if $\alpha^x = \alpha^y$ for some $x, y \in M$, then $xy^{-1} \in M_\alpha$, and so $\phi(xy^{-1})$ fixes $\underline{\gamma}$. Hence $\theta(\alpha^x) = \underline{\gamma}^{\phi(x)} = \underline{\gamma}^{\phi(y)} = \theta(\alpha^y)$. It is now easily checked that the pair $(\phi, \theta)$ is indeed a permutation isomorphism.

We now extend $(\phi, \theta)$ to a permutation isomorphism $(\hat{\phi}, \theta)$ between $\sym(\Omega)$ and $\sym(\Gamma^m)$ in the following way. Given $f \in \sym(\Omega)$, let $\hat{\phi}(f)$ be the permutation of $\Gamma^m$ which maps each $\underline{\delta} \in \Gamma^m$ to $\theta(\theta^{-1}(\underline{\delta})^f)$. It is easily seen that the map $\hat{\phi} : \sym(\Omega) \rightarrow \sym(\Gamma^m)$ is an isomorphism, and $(\hat{\phi}, \theta)$ is a permutation isomorphism from $\sym(\Omega)$ to $\sym(\Gamma^m)$. Moreover, if $\underline{\delta} \in \Gamma^m$ and $x \in M$ then
write $\beta := \theta^{-1}(\underline{\delta}) \in \Omega$ and observe that $\underline{\delta}^{\hat{\phi}(x)} = \theta(\theta^{-1}(\underline{\delta})^x) = \theta(\beta^x)  = \theta(\beta)^{\phi(x)} = \underline{\delta}^{\phi(x)}$. Hence the restriction of $\hat{\phi}$ to $M$ is $\phi$, and $\hat{\phi}(M) = K^m$.

It remains to show that $\hat{\phi}(G_\alpha) \leq \psi(N) \Wr S_m$, where $\psi(N) \Wr S_m$ is a subgroup of $\sym(\Gamma^m)$ via its product action. Fix $g \in G_\alpha$ and write $\sigma := \sigma(g)$. For $i = 1, \ldots, m$ define $h_{i} := g_i g g_{i^\sigma}^{-1}$. Note that each $h_i$ lies in $N$. Let $z := (\psi(h_1), \ldots, \psi(h_m))\sigma \in \psi(N) \Wr S_m$. We will show that $\hat{\phi}(g) = z$, from which our lemma follows. Indeed, suppose we are given $\underline{k} = (k_1, \ldots, k_m) \in K^m$. Write $x:=\phi^{-1}(\underline{t}) \in M$, so $\pi_i(x) = t_i^{g_i}$. Since the restriction of $\hat{\phi}$ to $M$ is $\phi$,
\begin{equation} \label{eq:final}
\underline{k}^{\hat{\phi}(g)} = 
\hat{\phi}(\hat{\phi}^{-1}(\underline{k}))^{\hat{\phi}(g)} =
\hat{\phi}(\phi^{-1}(\underline{k}))^{\hat{\phi}(g)} =
\hat{\phi}(x)^{\hat{\phi}(g)} =
\hat{\phi}(x^g) =
\phi(x^g).
\end{equation}
Hence for all $i$ we have $\pi_{i^\sigma}(\underline{k}^{\hat{\phi}(g)}) = \pi_{i^\sigma}(\phi(x^g))$, which by the definition of $\phi$ is equal to  $\pi_{i^\sigma}(x^g)^{g_{i^\sigma}^{-1}}$; by (\ref{eq:projection}) this equals $\pi_i (x)^{g g_{i^{\sigma}}^{-1}}$. Thus  $\pi_{i^\sigma}(\underline{k}^{\hat{\phi}(g)}) = k_i^{h_i}$. Now $\pi_{i^\sigma}(\underline{k}^z) = k_i^{\psi(h_i)}$, and since conjugation by $\psi(h_i)$ and $h_i$ induce the same automorphism of $T$, it follows that $\pi_{i^\sigma}(\underline{k}^z) = k_i^{h_i}$. Therefore $\underline{k}^{\hat{\phi}(g)} = \underline{k}^z$. Since $\underline{k}$ was chosen arbitrarily from $K^{m}$, it must be that $\hat{\phi}(g) z^{-1} \in C_{\sym(\Gamma^m)}(K^{m})$. But the full centraliser of $K^{m}$ is semi-regular because $K^{m}$ is transitive (see \cite[Theorem 4.2A]{dixon&mortimer} for example) and $\hat{\phi}(g)$ and $z$ fix $\underline{\gamma}$, so $\hat{\phi}(g) z^{-1}$ must be trivial. \end{proof}

Henceforth, $G \leq \sym (\Omega)$ will be an infinite primitive permutation group with a finite point stabilizer $G_\alpha$, and $M$ the unique minimal normal subgroup of $G$.
By Theorem~\ref{thm:socle_is_K_T}, $M=K_1 \times \cdots \times K_m$, with $K_i \cong K_1$ for $1 \leq i \leq m$, for some infinite nonabelian finitely generated simple group $K:=K_1$ and some finite $m \geq 1$. Let $T \subseteq G_\alpha$ be a right transversal $\{g_1, \ldots, g_m\}$ of $N_{G}(K)$ in $G$, with $g_1 = 1$, whose elements are labelled in such a way that $K_i = K^{g_i}$ for $1 \leq i \leq m$. Recall that $M$ is nonabelian.

We  examine separately the cases where $M$ is simple and acts regularly on $\Omega$, where $M$ is simple and non-regular, and where $M$ is non-simple; these cases will correspond respectively to $G$ being of type (\ref{classification_of_finite_stabilizer:split_extension_case}), (\ref{classification_of_finite_stabilizer:almost_simple_case}) or (\ref{classification_of_finite_stabilizer:wreath_product_case}) in
Theorem~\ref{thm:classification_of_finite_stabilizer}. Since the descriptions of $M$ are mutually exclusive, the same is true of the cases described in the theorem. Theorem~\ref{thm:classification_of_finite_stabilizer} follows immediately from Theorem~\ref{thm:socle_is_K_T},
Proposition~\ref{prop:split_extension},
Remark~\ref{rmk:almost_simple} and Theorem~\ref{thm:wreath_product_type}, given below.

\begin{prop} \label{prop:split_extension}
If $M$ acts regularly on $\Omega$, then $G$ is equal to the split extension $M.G_\alpha$. Furthermore, no non-identity element of $G_\alpha$ induces an inner automorphism of $M$.
\end{prop}

\begin{proof}
Since $G_\alpha \cap M = M_\alpha = \langle 1 \rangle$ the extension $G = M.G_\alpha$ splits. Moreover, $C_G(M)$ is normal in $N_G(M) = G$ and $M$ is the unique minimal normal subgroup of $G$, so either $C_G(M)$ contains $M$ or it is trivial. Since $M$ is nonabelian, $C_G(M)$ must be trivial, and no non-identity element of $G_\alpha$ induces an inner automorphism of $M$.
\end{proof}

In this case, we can identify $\Omega$ with $M$, and the natural action of $G_\alpha$ on $\Omega$ is permutation equivalent to the conjugation action of $G_\alpha$ on $M$. Primitive permutation groups with this structure have the following well-known characterisation.

\begin{prop} \cite[Exercise 2.5.8]{dixon&mortimer} \label{prop:G_prim_iff_Ga_no_normalising} If $H \leq \sym(\Omega)$ and $N \lhd H$ acts regularly on $\Omega$ and $\alpha \in \Omega$, then $H$ is primitive on $\Omega$ if and only if no non-trivial proper subgroup of $N$ is normalised by $H_\alpha$. \qed
\end{prop}

Peter Neumann has pointed out that under the conditions of Proposition~\ref{prop:split_extension} we can say a little more: there are no non-trivial proper $N_{G_\alpha}(K_1)$-invariant subgroups of $K_1$. Indeed, suppose $Y_1 < K_1$ is such a group, and let $Y_i := Y_1^{g_i}$. Define $N:=Y_1 \times \cdots \times Y_m$, a non-trivial proper subgroup of $M$.
Since $G_\alpha$ permutes elements of the set $\{K_i: 1 \leq i \leq m\}$ transitively by conjugation, for each $g \in G_\alpha$ there exists a permutation $\sigma \in S_m$ such that for all integers $i$ satisfying $1 \leq i \leq m$ we have $g_i g = h g_{i^\sigma}$ for some $h \in N_{G_\alpha}(K_1)$. Hence $Y_i^g = Y_1^{g_i g} = Y_1^{h g_{i^\sigma}} = Y_{i^\sigma}$. Thus $G_\alpha$ normalises $N$ and so $G$ is not primitive by Proposition~\ref{prop:G_prim_iff_Ga_no_normalising}.
 
\\

We now turn our attention to the case when $M$ does not act regularly on $\Omega$.

\begin{remark} \label{rmk:almost_simple} 
The group $G$ acts on $M$ by conjugation. Since  $C_G(M)$ is trivial, this action is faithful, and so if $M$ is simple then
\[ M \leq G \leq \aut(M),\]
where $M$ is a finitely generated, simple and nonabelian group of finite index in $G$. In particular, $G$ is almost simple.
\end{remark}

\begin{thm} \label{thm:wreath_product_type} Suppose $M$ is not simple. Then $G$ is permutation isomorphic to a subgroup of $H \Wr_\Delta \sym(\Delta)$, acting via the product action on $\Gamma^m$, where $\Delta = \{1, \ldots, m\}$ and $H \leq \sym(\Gamma)$ is an infinite primitive group with finite point stabilizers, the minimal normal subgroup of which is $K:=K_1$. \end{thm}

\begin{proof}
We have $m > 1$.
Let $R_i:= \pi_i(M_\alpha)$ and $R := R_1 \times \cdots \times R_m \leq M$. Clearly, $M_\alpha \leq R$. Since $G_\alpha$ normalizes $R$, we have $G_\alpha R \leq G$. Now $G_\alpha$ is maximal (because $G$ is primitive) and $G_\alpha R$ is finite. Hence $R \leq G_\alpha$, and from this it follows immediately that $R = M_\alpha$. Let $\Gamma$ be the set of right cosets of $R_1$ in $K$, with $\gamma$ representing the coset $R_1$. The simple group $K$ has a faithful, transitive action on $\Gamma$ by right multiplication, with $K_\gamma = R_1$.

Applying Lemma~\ref{lem:product}, we have that there exists a permutational embedding $(\hat{\phi}, \theta)$ of $G$ into $H \Wr \sym(\Gamma^m)$, where $H = K \psi(N_{G_\alpha}(K))$ for some homomorphism $\psi: N_{G_\alpha}(K) \rightarrow N_{\sym(\Gamma)_\gamma}(K)$. Hence $H_\gamma = K_\gamma \psi(N_{G_\alpha}(K))$, and so all point stabilisers in $H$ are finite.
Since $G$ is primitive on $\Omega$, its image $\hat{\phi}(G)$ is primitive on $\Gamma^m$, and so  $H \Wr S_m$ is primitive. It is well-known (see Lemma~\ref{lem:dixon_mortimer}) that $H$ must be primitive and not regular on $\Gamma$.  By Theorem~\ref{socle_size}, $H$ has a minimal normal subgroup which is unique, and this must obviously be $K$.
\end{proof}

Note that if $M$ is not regular, then $R$ is non-trivial and therefore $K$ is not regular; hence $H$ is of type~(\ref{classification_of_finite_stabilizer:almost_simple_case}). On the other hand, if $M$ is regular then $K$ is regular and hence $H$ is of type~(\ref{classification_of_finite_stabilizer:split_extension_case}).
This concludes our proof of Theorem~\ref{thm:classification_of_finite_stabilizer}. \\

We remark briefly why types of groups present in the classification in \cite{infinitary_versions} do not occur in Theorem~\ref{thm:classification_of_finite_stabilizer}. Groups of affine type (Type 1 in \cite{infinitary_versions}) require $K$ to be finite. Groups with a simple diagonal action (Type 4(a) in \cite{infinitary_versions}) require $K$ to be isomorphic to a point stabilizer in $\soc(G)$, impossible in our case because any stabilizer is finite. Groups satisfying $\soc(G) = M$ with a product action in which $H$ is diagonal (Type 4(b)(ii) in \cite{infinitary_versions}) appear when the possibility that $R_1^g = K^g$ for all $g \in T$ is explored (\cite[pp. 531]{infinitary_versions}), but this cannot occur in our case because $K$ is infinite and $R_1$ is finite.
Finally we repeat the remark in \cite[pp. 534]{infinitary_versions}, that the situation which leads to the twisted wreath product case in \cite{liebeck&praeger&saxl:finite_onan_scott} occurs only if $T$ is infinite. Since $T$ is always finite, twisted wreath products do not appear in our classification.\\

Finally, we note that conditions guaranteeing the primitivity of full wreath products acting in product action are known for both unrestricted wreath products (\cite[Lemma 2.7A]{dixon&mortimer}) and restricted wreath products (\cite[Lemma 3.1]{infinitary_versions}). Of course in class (\ref{classification_of_finite_stabilizer:wreath_product_case}) of Theorem~\ref{thm:classification_of_finite_stabilizer}, no distinction between the restricted and unrestricted wreath product need be made.

\begin{lemma} [{\cite[Lemma 2.7A]{dixon&mortimer}}] \label{lem:dixon_mortimer}
Suppose that $H$ and $S$ are non-trivial groups acting on the sets $\Gamma$ and $\Delta$, respectively. The (unrestricted) wreath product $H \Wr_\Delta S$ is primitive in its product action on $\Gamma^{|\Delta|}$ if and only if:
\begin{enumerate}
\item
	$H$ acts primitively but not regularly on $\Gamma$; and
\item
	$\Delta$ is finite and $S$ acts transitively on $\Delta$. \qed
\end{enumerate}
\end{lemma}

\noindent Notice that in class (\ref{classification_of_finite_stabilizer:wreath_product_case}) of Theorem~\ref{thm:classification_of_finite_stabilizer},  $\Delta$ is always finite.

%
%
\section{Examples}
\label{section:examples}

Given a prime $p>10^{75}$, there exists a group $T_p$, often called a Tarski--Ol'Shanski{\u\i} Monster, such that every proper non-trivial subgroup of $T_p$ has order $p$ ( \cite[Theorem 28.1]{olshanski}).
Any group $T_p$ can be considered to be an infinite primitive permutation group with finite point stabilizers of type (\ref{classification_of_finite_stabilizer:almost_simple_case}). Indeed, let $H$ be a proper non-trivial subgroup of $T_p$, and let $\Gamma$ be the set $(T_p : H)$ of right cosets of $H$ in $T_p$. Then $T_p$ acts transitively and faithfully, but not regularly, on $\Gamma$ by right multiplication. Any point stabilizer is isomorphic to $H$, a finite and maximal subgroup of $T_p$.

\\

Constructing an example of type (\ref{classification_of_finite_stabilizer:wreath_product_case}) is trivial: let $(T_p; \Gamma)$ be as above, and let $\Delta = \{1, \ldots, m\}$ for some $m \geq 2$. The wreath product $T_p \Wr_\Gamma \sym(\Delta)$ acting on $\Gamma^m$ via the product action is primitive, and the point stabilizers are all finite.

\\

The following result of V.~N.~Obraztsov can be used to construct examples of type (\ref{classification_of_finite_stabilizer:split_extension_case}). In what follows, an automorphism $\sigma$ of a group $G$ is {\em regular} if $g^\sigma \not = g$ for all $g \in G \setminus \langle 1 \rangle$.

\begin{thm}[{\cite[Theorem C, abridged]{obraztsov}}] \label{ob} Let $\{G_i\}_{i \in I}$ be a countable set of non-trivial countable groups containing either three groups or two groups of which one has order at least 3, and let $H$ be an arbitrary countable group. There exists a group $G$ which contains an infinite simple normal subgroup $L$ and satisfies:
\begin{enumerate}
\item \label{item:ob1}
	$G$ is the semi-direct product of $H$ and $L$;
\item \label{item:ob2}
	for each $h \in H \setminus \langle 1 \rangle$, $h$ is a regular automorphism of $L$;
\item \label{item:ob3}
	every proper subgroup of $L$ is either infinite cyclic, or infinite dihedral (if one of the groups $G_i$, $i \in I$, or $H$ has involutions), or contained in a subgroup conjugate in $G$ to some $G_i$, $i \in I$. \qed
\end{enumerate}
\end{thm}

For example, choose distinct odd primes $p_1, p_2, q$ such that $q \not \big| (p_1 -1)$ and $q \not \big| (p_2 -1)$, and take $G_1 := C_{p_1}, G_2 := C_{p_2}$, and $H := C_q$. By Theorem~\ref{ob}, we obtain a group $G$ and an infinite simple normal subgroup $L$ which satisfies (i)--(iii) above. Let $\Omega$  be the set of right cosets of $H$ in $G$ and let $\alpha$ be the coset $H.1 \in \Omega$. The kernel of the action of $G$ on $\Omega$ is contained in $G_\alpha = H = C_q$. If $G$ is not faithful then $H \unlhd G$, and so $G$ (being the semi-direct product of $H$ and $L$) is in fact the direct product of $H$ and $L$; this is impossible, because every non-trivial element in $H$ is a regular automorphism of $L$. Hence $G$ acts faithfully on $\Omega$, and we may consider $G$ to be a transitive subgroup of $\sym(\Omega)$.

Clearly $L \unlhd G$ is regular on $\Omega$. We claim that $G_\alpha$ normalizes no proper non-trivial subgroup of $L$. From this it will follow from Proposition~\ref{prop:G_prim_iff_Ga_no_normalising} that $G$ is primitive.
Since $G_\alpha = H$ there exists $h \in G_\alpha \setminus \langle 1 \rangle$. Let $K$ be a proper non-trivial subgroup of $L$. Then $K$ is isomorphic to $C_{\infty}, C_{p_1}$ or $C_{p_2}$. Suppose, for a contradiction, that $K^h = K$. Since $h$ has order $q$, it is easy to see that $K$ cannot be infinite and cyclic, so it is isomorphic to $C_p$ for some $p \in \{p_1, p_2\}$. Fix $k \in K \setminus \langle 1 \rangle$. Then $k^h \not = 1$ and there exists $i \in \Z$ such that $k^h = k^i$.
Now $p \big| (i^p - i)$, so $k^{i^p} = k^i$. But $k^{h^{p}} = k^{i^p}$, so $k^i$ is fixed by $h^{p-1}$. Since $h^{p-1} \in H \setminus \langle 1 \rangle$ is a regular automorphism of $L$, and $k^i$ is non-trivial, we have a contradiction.
Hence  $G_\alpha$ normalizes no proper non-trivial subgroup of $L$, and it follows that $G$ is an infinite primitive permutation group, with a finite stabilizer, of type (\ref{classification_of_finite_stabilizer:split_extension_case}).

%
%

\\

Some of the work for this paper took place while the author was a Philip T.~Church Postdoctoral Fellow at Syracuse University. The author would like to thank Mark Sapir for pointing out that \cite{obraztsov} could be used to construct an example of type (\ref{classification_of_finite_stabilizer:split_extension_case}), and Peter Neumann, Cheryl Praeger and Luke Morgan, for carefully reading early drafts of this paper. The author would also like to thank the anonymous referee, whose suggestions greatly improved this work.

%
%

\vspace{5mm}

\end{document}